\newtheorem{theorem}{Theorem}[section]
\newtheorem{lemma}[theorem]{Lemma}
\theoremstyle{definition}
\newtheorem{definition}[theorem]{Definition}
\newtheorem{question}[theorem]{Question}
\theoremstyle{remark}
\numberwithin{equation}{section}
\begin{document}

\title[Complete orbit equivalence relations]{Complete orbit equivalence relation and non-universal Polish groups}



\author{Longyun Ding}
\address{Nankai university}
\curraddr{School of Mathematical Sciences and LPMC, Nankai University, Tianjin 300071, P.R. China}
\email{dingly@nankai.edu.cn}

\author{Ruiwen Li}
\address{Nankai university}
\curraddr{School of Mathematical Sciences and LPMC, Nankai University, Tianjin 300071, P.R. China}
\email{rwli@mail.nankai.edu.cn}

\author{Bo Peng}
\address{McGill University}
\curraddr{Department of Mathmatics and Statistics, McGill University. 805 Sherbrooke Street West Montreal, Quebec, Canada, H3A 2K6}
\email{bo.peng3@mail.mcgill.ca}
\thanks{}


\date{}

\dedicatory{}

\commby{}

\begin{abstract}
We show that 
 a non-universal Polish group can induce a complete orbit equivalence relation, which answers a question of Sabok from \cite{OPENPROBLEMS}. 
\end{abstract}

\maketitle

\section{Introduction}

Let $G$ be a Polish group acting on a Polish space in a Borel way. Let $E^X_G$ be the \textbf{orbit equivalence relation} induced by the action. In other words, for $x,y \in X$,
$$
xE^X_Gy\,\,\Leftrightarrow\,\,\exists g\in G\,\,gx=y.
$$

 Let $E$ and $F$ be two equivalence relations on Polish spaces $X$ and $Y$, respectively. A Borel function from $X$ to $Y$ is called a \textbf{Borel reduction} from $E$ to $F$, if for any $x_1,x_2\in X$, we have
$$
x_1Ex_2\,\,\mbox{if and only if}\,\,f(x_1)Ff(x_2).
$$
We say $E$ is \textbf{Borel reducible} to $F$ if there is a Borel reduction from $E$ to $F$ and denote by $E\leq_B F$. 

It is natural to ask about the connection between the structure of a Polish group $G$ and the complexity of the equivalence relations it induces. The connection is well-studied if $G$ is countable. For example, if $G=\mathbb{Z}$, then all actions induced by $G$ are hyperfinite. In an important breakthrough, Gao and Jackson \cite{Gaojackson} proved that all actions induced by countable Abelian groups are hyperfinite.  Weiss conjectured that hyperfiniteness holds for all countable amenable group actions. This conjecture is still open so far.  

For every Polish group $G$, Becker and Kechris \cite[Theorem 3.3.4]{Gaobook} proved that there is a most complicated orbit equivalence relation induced by $G$, called a \textbf{maximal $G$-action}. In other words, there exists a $G$-action such that all $G$-actions are Borel reducible to it. Thus, the question regarding connections between a Polish group and its actions can be restated as the following: given a Polish group $G$, where is its maximal action located in the complexity hierarchy?

A Polish group $G$ is \textbf{universal} if all Polish groups can be continuously embedded as closed subgroup of $G$. An equivalence relation $E$ is \textbf{complete} in a class $\mathcal{C}$ of equivalence relations if $E$ is in $\mathcal{C}$ and all equivalence relations in $\mathcal{C}$ are Borel reducible to $E$.  Complete orbit equivalence relations exist. Indeed, the maximal action of any universal Polish group is a complete orbit equivalence relation by the Mackey-Hjorth extension theorem \cite[Theorem 3.5.2]{Gaobook}.  There are also natural examples in topology and $C^*$-algebras (see, e.g. \cite{Sabokcomplete}, \cite{Zielinski}).

It is natural to ask that if there is a connection between completeness of an orbit equivalence relation and the structure of the group which induces this equivalence relation. 

 In this paper, we address the following question posed by Sabok in \cite{OPENPROBLEMS}:

\begin{question} \label{main}
    (\cite{OPENPROBLEMS}) Does there exist a non-universal Polish group which induces a complete orbit equivalence relation?
\end{question}

We answer this question in the positive,

\begin{theorem}
    There exists a non-universal Polish group which induces complete orbit equivalence relation.
\end{theorem}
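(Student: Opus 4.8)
The plan is to produce a non-universal Polish group $K$ together with a continuous surjective homomorphism $\varphi\colon K\twoheadrightarrow G$ onto a universal Polish group $G$ (one may take $G=\mathrm{Iso}(\mathbb{U})$ or $G=\mathrm{Homeo}([0,1]^{\mathbb{N}})$, whose maximal actions are complete by the Mackey--Hjorth argument recalled above). Pulling the maximal $G$-action on a Polish space $X$ back along $\varphi$ gives a Borel action of $K$ on $X$ with the same orbits --- since $\varphi$ is onto, $K\cdot x=\varphi(K)\cdot x=G\cdot x$ --- so $E^X_K$ literally equals the maximal $G$-action and is therefore complete; a fortiori the maximal $K$-action is complete, and $K$ is not universal, which is exactly what is wanted. (Any Polish group containing such a $K$ as a closed subgroup works equally well through the Mackey--Hjorth extension theorem, but taking the group to be $K$ itself is cleanest.) So everything reduces to constructing a \emph{tame cover} of a universal Polish group: a non-universal Polish group that maps continuously onto one.

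Such a thing is not ruled out, since a quotient of a non-universal Polish group can be ``bigger'' than the group itself. The subtlety to respect is that non-universality of $K$ cannot be arranged to follow from a property $\mathcal{P}$ inherited both by closed subgroups \emph{and} by Polish quotients --- for instance being non-archimedean, CLI, or carrying a two-sided invariant metric --- because then $G=K/\ker\varphi$ would inherit $\mathcal{P}$, contradicting that no universal Polish group is non-archimedean, CLI, or TSI. In particular $K$ cannot itself be non-archimedean, as quotients of non-archimedean Polish groups are again non-archimedean; so the continuous, ``metric'' part of the construction must carry the Urysohn (or Hilbert-cube) geometry. I would build $K$ as a group extension $1\to N\to K\to G\to 1$ with $N$ chosen large and rigid enough that the extension, and every configuration of it sitting inside a closed subgroup of $K$, fails to split; and I would then verify non-universality of $K$ by hand, exhibiting one concrete Polish group --- say $S_\infty$ --- that is not a closed subgroup of $K$, using that every closed subgroup of $K$ must interact nontrivially with $N$. (Alternatively one could hope to find a closed subgroup of an already-known non-universal ``large'' group, such as $U(\mathcal{H})$, that surjects onto $\mathrm{Iso}(\mathbb{U})$; closed subgroups of non-universal groups are non-universal, so only the surjection would remain to be produced.)

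The crux, and the hardest point, is this reconciliation of ``rich enough to carry a complete action'' with ``structurally small enough to be non-universal.'' A group whose maximal action is complete must, loosely speaking, move as much as $\mathrm{Iso}(\mathbb{U})$, yet the direct routes to this --- having $\mathrm{Iso}(\mathbb{U})$ or $\mathrm{Homeo}([0,1]^{\mathbb{N}})$ as a closed subgroup --- force universality; so a universal group may appear only as a quotient, never as a closed subgroup, of the group we construct. Getting enough control over all closed subgroups of the extension $K$ to name even one Polish group it omits, and confirming that the cover $K$ and its defining surjection genuinely live in the Polish category, is where the real work lies. Once that is done, checking that $E^X_K$ is a bona fide orbit equivalence relation and that the pullback identification is correct is routine.
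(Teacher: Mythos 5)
Your reduction step is correct and is exactly the paper's Lemma 3.1: if a non-universal Polish group $K$ admits a continuous surjective homomorphism onto a group whose maximal action is complete (equivalently, if $K$ is surjectively universal but not universal), then pulling the action back along the surjection yields the same orbit equivalence relation, so $K$ induces a complete orbit equivalence relation. The problem is that everything after that reduction --- which you yourself correctly identify as ``where the real work lies'' --- is not carried out. You never produce the group $K$. The proposed construction (``build $K$ as a group extension $1\to N\to K\to G\to 1$ with $N$ chosen large and rigid enough,'' then ``verify non-universality by hand'') is a research program, not an argument: no candidate $N$ is named, no reason is given that the resulting extension is Polish or that the quotient map is continuous, and no proof is sketched that any specific Polish group fails to embed as a closed subgroup of $K$. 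Since the existence of a surjectively universal, non-universal Polish group is precisely the mathematical content of the theorem, this is a genuine gap rather than a routine omission.

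For comparison, the paper takes an already-constructed surjectively universal group, Ding's $\overline{F}_\Gamma(\mathcal{N})$ (the completion of the free group on the Baire space under a scaled Graev metric), and proves non-universality by exhibiting a continuous injective homomorphism into $\prod_{n}F(\mathcal{N}_n)$, a product of countable discrete groups; since that product is totally disconnected, $(\mathbb{R},+)$ cannot be a closed subgroup of $\overline{F}_\Gamma(\mathcal{N})$. This also shows that your heuristic --- that the group must ``carry the Urysohn geometry'' and that the obstruction to universality cannot be of a combinatorial/non-archimedean flavour --- is somewhat misleading: the witness group continuously injects into a totally disconnected group (it is merely not itself non-archimedean, consistent with your correct observation that open quotients of non-archimedean groups are non-archimedean), and the non-universality certificate is exactly a connectedness obstruction. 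If you want to complete your proof along your own lines, you would need to either construct your extension explicitly and control its closed subgroups, or, as the paper does, import an existing surjectively universal group and find a concrete topological obstruction to embedding some fixed Polish group (such as $\mathbb{R}$ or ${\rm Iso}(\mathbb{U})$) into it as a closed subgroup.
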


In \cite{Ding}, Ding found an example of a surjectively universal Polish group. We note that such group can induce complete orbit equivalence relation, and we show that this group is not universal. As a matter of fact, in \cite{Saboksurvey}, Sabok mentioned without proof that a surjectively universal but not universal Polish group could serve as an example for Question \footnote{In fact, the authors found out about this after this work has been completed}. However, showing that a surjectively universal group which is not universal exists requires a proof and in the subsequent survey \cite{OPENPROBLEMS}  Question \ref{main} was stated as open.

\section{Preliminaries}
In this section we discuss the construction and some results about surjectively universal Polish groups from \cite{Ding} and \cite{DingGao}.

For a nonempty set $X$, let $X^{-1} =\{x^{-1}: x \in X\}$ be a disjoint copy of $X$, take $e\notin X\cup X^{-1}$. Denote $X\cup X^{-1}\cup\{e\}$ by $\overline{X}$. For $x\in X$, by $(x^{-1})^{-1}$ we mean $x$, and by $e^{-1}$ we mean $e$.

 We denote the set of nonempty words on $\overline{X}$ by $W(X)$, in other words $W(X)=\overline{X}^{<\omega}\setminus\{\emptyset\}$. For $w\in W(X)$, $|w|$ is the length of $w$. A word $w\in W(X)$ is \textbf{irreducible} if $w=e$ or $w=x_0\cdots x_n$ such that $n\ge0$, $x_i\ne e$ for every $0\le i\le n$ and $x_i\ne x_{i+1}^{-1}$ for every $0\le i\le n-1$. We denote the set of irreducible words in $W(X)$ by $F(X)$.  For a word $w\in W(X)$,
the \textbf{reduced word} for $w$ is the irreducible word obtained by successively replacing any occurrence of $xx^{-1}$ in $w$ by $e$ where $x\in \overline{X}$, and eliminating $e$ from any occurrence of the form $w_1ew_2$, where at least one of $w_1$ and $w_2$ is nonempty. Note that the reduced word of $w\in W(X)$ is independent from the order of eliminating $e$ or replacing $xx^{-1}$ to $e$, denote the reduced word for $w\in W(X)$ by $w'$, then $w'\in F(X)$. For $u,v\in F(X)$, let $u\cdot v=(u^\smallfrown v)'$, then $(F(X),\cdot)$ is a group, named the \textbf{free group} on $X$. The neutral element of $F(X)$ is $e$.

To define a metric on $F(X)$, firstly we assume that $X$ is a metric space with a metric $d\le1$, that is $d(x,y)\le1$ for every $x,y\in X$. We can extend $d$ to $\overline{X}$ by defining $$d(x^{-1},  y^{-1})=d(x,y),\quad d(x^{-1},y)=d(x,e)=d(x^{-1},e)=1$$for every $x,y\in X$. Then to get a metric on $F(X)$ we need some more concepts.
\begin{definition}\!\!\mbox{(Ding-Gao, \cite{DingGao_1}, \cite[Definition 2.3.2]{Gaobook})}
For $m,n\in\mathbb{N}$ and $m\le n$, a bijection $\theta$ on $\{m,\ldots,n\}$ is a \textbf{match} if
\begin{enumerate}
    \item[(1)] $\theta^2={\rm id}$;
    \item[(2)] there is no $i,j\in\{m,\ldots,n\}$ such that $i<j<\theta(i)<\theta(j)$.
\end{enumerate}
\end{definition}

\begin{definition}[Ding-Gao, \cite{DingGao}]
Let $\mathbb{R}_+$ be the set of non-negative real numbers. A function $\Gamma : \overline{X}\times\mathbb{R}_+\rightarrow\mathbb{R}_+$ is a \textbf{scale} on $\overline{X}$ if the following hold for any $x\in\overline{X}$ and $r\in\mathbb{R}_+$
\begin{enumerate}
\item[(i)] $\Gamma(e,r)=r$, $\Gamma(x,r)\ge r$;
 \item[(ii)] $\Gamma(x,r)=0$ iff $r=0$;
 \item[(iii)] $\Gamma(x,\cdot)$ is a monotone increasing function with respect to the second variable;
 \item[(iv)] ${\rm lim}_{r\rightarrow0} \Gamma(x,r)=0$.
\end{enumerate}
\end{definition}

If we take $\Gamma(x,r)=r$ for every $x\in\overline{X}$ and $r\in\mathbb{R}_+$, then $\Gamma$ is a scale on $\overline{X}$, we call it the \textbf{trivial} scale on $\overline{X}$.

Let $G$ be a topological group with compatible left-invariant metric $d_G$. Define $\Gamma_G:G\times\mathbb{R}_+ \rightarrow \mathbb{R}_+$ by
 $$\Gamma_G(g,r) ={\rm max}\{ r,{\rm sup}\{d_G(1_G,g^{-1}hg):d_G(1_G,h)\le r\}\} .$$
 It is easy to see that $\Gamma_G$ satisfies the conditions (i)–(iii) in the definition of a scale, and (iv) is from the compatibility of $d_G$. We will also call $\Gamma_G$ \textbf{the scale on $G$}.

    Then given a scale $\Gamma$ on $\overline{X}$, we will define a metric on $F(X)$ related to $\Gamma$.

\begin{definition}[Ding-Gao, \cite{DingGao}]
     Let $\Gamma$ be a scale on $\overline{X}$. For $l\in\mathbb{N}$, $w\in W(X)$ with $|w|=l+1$ and $\theta$ a match on $\{0,\ldots,l\}$, define $N^{\theta}_\Gamma (w)$ by induction on $l$ as follows:
\begin{enumerate}
 \item[(0)] for $l =0$, let $w =x$ and define $N^\theta_\Gamma(w)=d(e,x)$;
\item[(1)] if $l>0$ and $\theta(0)=k<l$, let $\theta_1 =\theta \upharpoonright\{0,\ldots,k\}$, $\theta_2=\theta \upharpoonright\{k+1,\ldots,l\}$ and $w =w_1^\smallfrown w_2$ where $|w_1|=k +1$; define
 $$N^{\theta}_\Gamma(w)=N^{\theta_1}_\Gamma(w_1)+N^{\theta_2}_\Gamma(w_2);$$

\item[(2)] if $l>0$ and $\theta(0)=l$, let $\theta_1 =\theta \upharpoonright\{1,\ldots,l-1\}$ and $w = x^{-1}w_1y$ where $x,y \in\overline{X}$; then $|w_1| =l-1$. Define
 $$N^{\theta}_\Gamma(w)=d(x,y)+{\rm max}\{\Gamma(x,N^{\theta_1}_\Gamma(w_1)),\Gamma(y,N^{\theta_1}_\Gamma(w_1))\}.$$
\end{enumerate}

Then we take $$N_\Gamma(w)={\rm inf}\{N^\theta_\Gamma(w^*):(w^*)'=w,\,\theta\;is\;a\;match\}$$ for $w\in F(X)$, and we take $$\delta_\Gamma(u,v)=N_\Gamma(u^{-1}v)$$ for $u,v\in F(X)$.
\end{definition}

For a metric space $(X,d)$ and the trivial scale $\Gamma_0$ on $\overline{X}$, we call $\delta_{\Gamma_0}$ \textbf{the Graev metric} on $F(X)$. 

For $u=x_0\cdots x_n$ and $v=y_0\cdots y_n$ in $W(X)$, let $$\rho(u,v)=\Sigma_{0\le i\le n}d(x_i,y_i).$$ Given $w=x_0\cdots x_n\in W(X)$ and a match $\theta$ on $\{0,\cdots,n\}$, let 
$$x^\theta_i= \left \{\begin{array}{lr}  x_i,&\theta(i)> i.\\ e,& \theta(i)=i.\\ x^{-1}_{\theta(i)},&\theta(i)< i. \end{array} \right.$$and $w^\theta=x^\theta_0\cdots x^\theta_n$. Then we have

\begin{theorem}\label{calculate}\!\!\mbox{{\rm (Ding-Gao, \cite[Theorem 2.6.5]{Gaobook})}}
Let $(X,d)$ be a metric space, $\Gamma_0$ is the trivial scale on $\overline{X}$, then $$\delta_{\Gamma_0}(w,e)={\rm min}\{\rho(w,w^\theta):\;\theta\, is\;a\;match\}$$ for every $w\in F(X)$.
\end{theorem}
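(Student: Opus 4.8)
The plan is to reduce the statement about the metric $\delta_{\Gamma_0}$ to one about the norm $N_{\Gamma_0}$, then to collapse the inductive definition of $N^\theta_{\Gamma_0}$ using the trivial scale, and finally to prove a purely combinatorial lemma saying that the infimum defining $N_{\Gamma_0}$ is already attained at $w$ itself. For the first reduction: since $\delta_{\Gamma_0}$ is a metric and $e^{-1}\cdot w=w$, we have $\delta_{\Gamma_0}(w,e)=\delta_{\Gamma_0}(e,w)=N_{\Gamma_0}(e^{-1}\cdot w)=N_{\Gamma_0}(w)$, so it suffices to prove $N_{\Gamma_0}(w)=\min\{\rho(w,w^\theta):\theta\text{ a match}\}$ for every $w\in F(X)$; the minimum is over the finite set of matches on $\{0,\ldots,|w|-1\}$, so it exists.

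Next I would show, by induction on $|u|$, that $N^\theta_{\Gamma_0}(u)=\rho(u,u^\theta)$ for every $u\in W(X)$ and every match $\theta$ on the positions of $u$, following the three clauses of the definition of $N^\theta_\Gamma$. In clause (0) both sides equal $d(e,x)$. In clause (1) with $\theta(0)=k<l$, the non-crossing condition forces $\{0,\ldots,k\}$ to be $\theta$-invariant, so $\theta$ is the disjoint union $\theta_1\sqcup\theta_2$, the word $u^\theta$ is the concatenation of $u_1^{\theta_1}$ and $u_2^{\theta_2}$, and both quantities split as the matching sum; apply the inductive hypothesis. In clause (2) with $\theta(0)=l$ and $u=x^{-1}u_1y$, the identities $\Gamma_0(x,r)=\Gamma_0(y,r)=r$ collapse the maximum, while $u^\theta$ equals $x^{-1}$ followed by $u_1^{\theta_1}$ followed by $x$, so $\rho(u,u^\theta)=d(x,y)+\rho(u_1,u_1^{\theta_1})$, and the inductive hypothesis finishes it. Substituting $u=w^*$ with $(w^*)'=w$ into the definition of $N_{\Gamma_0}$ then yields $N_{\Gamma_0}(w)=\inf\{\rho(w^*,(w^*)^\theta):(w^*)'=w,\ \theta\text{ a match on }w^*\}$, and taking $w^*=w$ already gives the inequality $N_{\Gamma_0}(w)\le\min_\theta\rho(w,w^\theta)$.

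It remains to show the infimum is attained at $w^*=w$, i.e.\ $\rho(w^*,(w^*)^{\theta^*})\ge\min_\theta\rho(w,w^\theta)$ for all $w^*$ with $(w^*)'=w$ and all matches $\theta^*$ on $w^*$. I would argue by induction on $|w^*|$: if $w^*$ is irreducible then $w^*=w$ and $\theta=\theta^*$ works; otherwise $w^*$ has a letter $e$ or two adjacent letters $a,a^{-1}$ with $a\ne e$, and deleting these one or two positions gives $w^{**}$ with $(w^{**})'=w$ and $|w^{**}|<|w^*|$, so it is enough to transport $\theta^*$ to a match $\theta^{**}$ on $w^{**}$ without increasing the $\rho$-cost and then invoke the inductive hypothesis. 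Depending on how $\theta^*$ acts on the deleted positions, $\theta^{**}$ is obtained from $\theta^*$ either by restriction (when those positions are $\theta^*$-fixed or matched to one another, so their contribution is $0$ or drops away; the restriction stays non-crossing since the complement of a $\theta^*$-pair is $\theta^*$-invariant), or by declaring a surviving partner a fixed point, or by matching the two surviving partners $j$ and $k$ to one another. The non-crossing property of $\theta^*$ restricts the matched-elsewhere cases to a short list of configurations (the partners nested on one side of the deleted block, or split across it), and in each the new contribution $d(x_j,x_k^{\pm1})$ (or $d(x_j,e)$) is bounded by the old one using the triangle inequality in $\overline X$ together with $d(u^{-1},v^{-1})=d(u,v)$ and $d\le1$, while a configuration-by-configuration check shows $\theta^{**}$ is still non-crossing.

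Steps one and two are bookkeeping driven entirely by the definitions and by the collapse $\Gamma_0(x,r)=r$. I expect the main obstacle to be the third step: one must verify, configuration by configuration, that deleting a matched-off pair of positions and re-matching their former partners yields a genuine match, and that the triangle-inequality accounting survives the degenerate geometry of $\overline X$ (where $d\le1$ everywhere and two letters of different types are always at distance $1$). Organizing this case analysis cleanly — rather than the underlying inequalities themselves — is where the real care is needed.
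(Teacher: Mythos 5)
Your proposal is correct, and it is essentially the standard argument: the paper itself gives no proof of this statement (it is quoted verbatim from Gao's book, Theorem 2.6.5), and the proof there proceeds exactly as you outline — reduce $\delta_{\Gamma_0}(w,e)$ to $N_{\Gamma_0}(w)$, observe that the trivial scale collapses the recursion to $N^\theta_{\Gamma_0}(u)=\rho(u,u^\theta)$ (your invariance check for clause (1) and the computation $d(x^{-1},x^{-1})+d(y,x)=d(x,y)$ in clause (2) are the right verifications), and then show by a one-cancellation-at-a-time induction that the infimum over all $w^*$ with $(w^*)'=w$ is attained at $w^*=w$. Your case analysis for the reduction step is the correct one: the non-crossing condition rules out the ``crossed'' configuration, leaving only nested-left, nested-right, and split, each of which is handled by the triangle inequality $d(b,c^{-1})\le d(b,a^{-1})+d(a^{-1},c^{-1})$ together with $d(u^{-1},v^{-1})=d(u,v)$, and the re-matched pairing remains non-crossing. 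The only points worth flagging are cosmetic: your identity $\delta_{\Gamma_0}(w,e)=N_{\Gamma_0}(w)$ uses symmetry of $\delta_{\Gamma_0}$ (licensed by the quoted Ding--Gao theorem that it is a metric; alternatively prove $N_{\Gamma_0}(w)=N_{\Gamma_0}(w^{-1})$ directly by reversing and inverting words and matches), and since $W(X)$ excludes the empty word, the deletion of a cancelling pair in a length-two word should be read as replacement by the single letter $e$, which costs $0$ and is the trivial base case.
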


Then we have the following theorem in \cite{DingGao}
\begin{theorem}\!\!\mbox{{\rm (Ding-Gao, \cite[Theorem 3.9]{DingGao_1})}}
Let $(X,d)$ be a metric space, $\Gamma$ is a scale on $\overline{X}$, then the $d_\Gamma$ is a left invariant
 metric on $F(X)$ extending $d$. Furthermore, $F(X)$ is a topological group under the topology induced by $\delta_\Gamma$.
\end{theorem}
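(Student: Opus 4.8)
The plan is to verify, in order: (i) $N_\Gamma$ is a well-defined finite nonnegative function on $F(X)$; (ii) $\delta_\Gamma$ is a left-invariant pseudometric; (iii) $\delta_\Gamma$ separates points and restricts to $d$ on $X$; (iv) inversion and multiplication are continuous for the topology of $\delta_\Gamma$. For (i), the set $\{N^\theta_\Gamma(w^*):(w^*)'=w,\ \theta\text{ a match}\}$ is nonempty (take $w^*=w$ with the identity match) and consists of nonnegative reals bounded above by the value of the identity match on $w$, a finite sum of distances $\le 1$; so $N_\Gamma(w)\in[0,\infty)$, depending only on the group element $w$. For (ii), reflexivity is $\delta_\Gamma(u,u)=N_\Gamma(e)=0$. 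For symmetry of $\delta_\Gamma$ I would prove $N_\Gamma(w)=N_\Gamma(w^{-1})$ via the stronger statement $N^\theta_\Gamma(w^*)=N^{\bar\theta}_\Gamma(\overline{w^*})$, where $\overline{w^*}$ is $w^*$ read backwards with every letter inverted and $\bar\theta$ the mirror-image match; this is an induction on $|w^*|$ through the two cases of the definition (case (2) is symmetric because $d(x,y)=d(y,x)$ and $\max$ is symmetric; case (1) splits and recurses), and since $(w^*,\theta)\mapsto(\overline{w^*},\bar\theta)$ is a bijection carrying words reducing to $w$ onto words reducing to $w^{-1}$, infimizing gives the claim.

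For the triangle inequality I would first prove subadditivity $N_\Gamma(uv)\le N_\Gamma(u)+N_\Gamma(v)$ for $u,v\in F(X)$: given words $u^*,v^*$ reducing to $u,v$ with matches $\theta_u,\theta_v$, the concatenation of $u^*$ and $v^*$ reduces to $uv$, and the disjoint union $\theta_u\sqcup\theta_v$ (with the second block shifted) is again a match, because a pair inside the first block can never cross into the second; an induction on word length using case (1) of the definition then gives $N^{\theta_u\sqcup\theta_v}_\Gamma=N^{\theta_u}_\Gamma(u^*)+N^{\theta_v}_\Gamma(v^*)$, so infimizing yields subadditivity. Hence $\delta_\Gamma(u,w)=N_\Gamma\big((u^{-1}v)(v^{-1}w)\big)\le N_\Gamma(u^{-1}v)+N_\Gamma(v^{-1}w)=\delta_\Gamma(u,v)+\delta_\Gamma(v,w)$ for $u,v,w\in F(X)$. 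Left-invariance is immediate: $\delta_\Gamma(gu,gv)=N_\Gamma\big((gu)^{-1}(gv)\big)=N_\Gamma(u^{-1}v)=\delta_\Gamma(u,v)$.

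For (iii), since $\Gamma(x,r)\ge r=\Gamma_0(x,r)$ and the recursion defining $N^\theta_\Gamma$ is monotone in these values, an easy induction gives $N^\theta_\Gamma(w^*)\ge N^\theta_{\Gamma_0}(w^*)$ for every $w^*,\theta$, hence $N_\Gamma(w)\ge N_{\Gamma_0}(w)=\delta_{\Gamma_0}(w,e)$. Now fix $w=x_0\cdots x_n\ne e$ irreducible; by Theorem \ref{calculate}, $\delta_{\Gamma_0}(w,e)=\min_\theta\rho(w,w^\theta)$ over the finitely many matches $\theta$ of $\{0,\ldots,n\}$, so it suffices to show each $\rho(w,w^\theta)>0$. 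If $\theta$ fixes some $i$ then $x^\theta_i=e$ and $d(x_i,e)=1$; otherwise $\theta$ is a non-crossing perfect matching, which always contains an adjacent pair $\{j,j+1\}$ (take one of minimal span), and then $x^\theta_j=x_j$ while $x^\theta_{j+1}=x_j^{-1}$, so $\rho(w,w^\theta)\ge d(x_{j+1},x_j^{-1})>0$ because irreducibility gives $x_{j+1}\ne x_j^{-1}$ and the extended $d$ separates the points of $\overline X$. Thus $\delta_\Gamma(u,v)=N_\Gamma(u^{-1}v)>0$ for $u\ne v$, and $\delta_\Gamma$ is a genuine metric. That it extends $d$: for $x,y\in X$ the lower bound $\delta_\Gamma(x,y)\ge\delta_{\Gamma_0}(x,y)=d(x,y)$ follows from Theorem \ref{calculate} together with $d\le1$, and the upper bound $\delta_\Gamma(x,y)\le d(x,y)$ comes from evaluating $N^\theta_\Gamma$ on the two-letter word $x^{-1}y$ with the match pairing its two positions (case (2), using $\Gamma(\cdot,0)=0$).

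For (iv), the engine is the conjugation estimate $N_\Gamma(x^{-1}wx)\le\Gamma(x,N_\Gamma(w))$ for $x\in\overline X$, $w\in F(X)$, which I read off case (2): prepending $x^{-1}$ and appending $x$ to a near-optimal word $w^*$ with match $\theta'$ for $w$ yields a word reducing to (a representative of) $x^{-1}wx$, and the match pairing its endpoints and restricting to $\theta'$ in between has $N^\theta_\Gamma$-value $d(x,x)+\Gamma(x,N^{\theta'}_\Gamma(w^*))=\Gamma(x,N^{\theta'}_\Gamma(w^*))$; infimizing and invoking monotonicity of $\Gamma(x,\cdot)$ and axiom (iv) shows $N_\Gamma(w_n)\to0\Rightarrow N_\Gamma(x^{-1}w_nx)\to0$, and iterating over the letters of a fixed $g\in F(X)$ gives that conjugation by $g$ is continuous at $e$. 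Continuity of inversion then follows since $\delta_\Gamma(u_n^{-1},u^{-1})=N_\Gamma(u_nu^{-1})=N_\Gamma\big(u(u^{-1}u_n)u^{-1}\big)$ with $N_\Gamma(u^{-1}u_n)=\delta_\Gamma(u,u_n)\to0$. Continuity of multiplication: by left-invariance it suffices that $N_\Gamma(v_n^{-1}w_nv_n)\to0$ when $N_\Gamma(w_n)\to0$ and $v_n\to v$; writing $v_n=vs_n$ with $N_\Gamma(s_n)=\delta_\Gamma(v,v_n)\to0$, one has $v_n^{-1}w_nv_n=s_n^{-1}(v^{-1}w_nv)s_n$ where $N_\Gamma(v^{-1}w_nv)\to0$ by the fixed-$g$ statement, and two applications of the triangle inequality together with left-invariance and symmetry give $N_\Gamma\big(s_n^{-1}(v^{-1}w_nv)s_n\big)\le 2N_\Gamma(s_n)+N_\Gamma(v^{-1}w_nv)\to0$. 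I expect the separation-of-points step in (iii) to be the main obstacle, since it forces a descent to the Graev metric and a combinatorial analysis of non-crossing matchings of irreducible words; the other fiddly point is making the additivity of $N^\theta_\Gamma$ over block-respecting matches precise by induction. By contrast, the conjugation estimate — the conceptual reason scales are the right notion — falls out of case (2) of the definition almost at once, and the continuity of the group operations is then routine metric bookkeeping.
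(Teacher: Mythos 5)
The paper does not prove this statement---it is imported verbatim from Ding--Gao \cite{DingGao_1} as a preliminary---so there is no in-paper proof to compare against; your argument is the standard one for Graev-type metrics and is essentially correct (well-definedness and subadditivity of $N_\Gamma$, symmetry via the mirrored word and match, positivity by comparison with the trivial scale plus Theorem \ref{calculate} and the adjacent-pair analysis of non-crossing matchings, and the conjugation estimate from case (2) for the group topology). Two small points to tighten: the mirror-symmetry induction needs the observation that $N^\theta_\Gamma$ is a sum over the maximal blocks of $\theta$ (the recursion peels blocks from the left, so reversing the word permutes the summands), and the sharp inequality $N_\Gamma(x^{-1}wx)\le\Gamma(x,N_\Gamma(w))$ requires right-continuity of $\Gamma(x,\cdot)$, though the weaker limit form you actually invoke via axiom (iv) of a scale is all that the continuity of the group operations needs.
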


Denote the topological group $F(X)$ with the metric $\delta_\Gamma$ by $F_\Gamma(X)$. Let $\delta^{-1}_\Gamma(u,v)=\delta_\Gamma(u^{-1},v^{-1})$ for $u,v\in F(X)$, and $d_\Gamma=\delta^{-1}_\Gamma+\delta_\Gamma$, then $d_\Gamma$ is a compatible metric on topological group $F_\Gamma(X)$. The completion of $(F_\Gamma(X),d_\Gamma)$ is a Polish group, denoted by $\overline{F}_\Gamma(X)$. Then the main theorem in \cite{Ding} says that such a group of this form can be a surjectively universal group.

We denote the Baire space $\omega^\omega$ by $\mathcal{N}$, $d$ is the classical metric on $\mathcal{N}$ such that $d(x,y)={\rm max}\{2^{-n}:x(n)\ne y(n)\}$. For $n\in\mathbb{N}$, let $$\mathcal{N}_n=\{x\in\mathcal{N}:\forall m\ge n,x(m)=0\},$$ and $\mathcal{N}_\omega=\bigcup_{n\in\mathbb{N} }\mathcal{N}_n$. For $x\in\mathcal{N}$, let 
\[\pi_n(x)(m)=
    \left\{\begin{array}{lr}
    x(m),&m<n.\\
    0,& m\ge n.
    \end{array}
    \right.\] Then a scale $\Gamma$ on $\overline{\mathcal{N}}$ is \textbf{regular} if for every $x\in\overline{\mathcal{N}}$, $r\in\mathbb{R}_+$ and $n\in\mathbb{N}$, we have $\Gamma(x,r)\ge\Gamma(\pi_n(x),r)$.

    \begin{theorem}\label{surjectively universal}\!\!\mbox{{\rm (Ding-Gao, \cite[Example 4.9]{Ding})}}
There is a regular scale $\Gamma$ on $\overline{\mathcal{N}}$ such that $\overline{F}_\Gamma(\mathcal{N})$ is a surjectively universal Polish group.
    \end{theorem}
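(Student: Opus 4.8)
The plan is to prove surjective universality directly. Fix an arbitrary Polish group $H$; by Birkhoff--Kakutani choose a compatible left-invariant metric $d_H\le 1$ on $H$, so that $\Gamma_H(h,r)\le 1$ for all $h,r$. Since every nonempty Polish space is a continuous image of $\mathcal{N}$, pick a continuous surjection $q:\mathcal{N}\to H$; by the universal property of the free group it extends to a surjective group homomorphism $\hat q:F(\mathcal{N})\to H$. The goal is to choose $q$ (depending on $H$) and the scale $\Gamma$ (once and for all, before seeing $H$) so that $\hat q$ is uniformly continuous from $(F(\mathcal{N}),\delta_\Gamma)$ to $(H,d_H)$; then $\hat q$ is uniformly continuous for $d_\Gamma\ge\delta_\Gamma$ as well, hence extends to a continuous homomorphism $\overline{F}_\Gamma(\mathcal{N})\to H$, still onto because $q$ is. Since $\hat q$ is a homomorphism and $d_H$ is left invariant, this uniform continuity is equivalent to the one-variable statement: $N_\Gamma(w)\to 0$ implies $d_H(1_H,\hat q(w))\to 0$ for $w\in F(\mathcal{N})$.

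To obtain this I would use a comparison (functoriality) principle for the Graev--scale construction, which is read off from the inductive definition of $N^\theta_\Gamma$: if $q:\mathcal{N}\to H$ is $1$-Lipschitz for the extended metrics on $\overline{\mathcal{N}}$ and $\overline{H}$, and if $\Gamma$ \emph{dominates} $\Gamma_H$ along $q$, i.e. $\Gamma(x,r)\ge\Gamma_H(q(x),r)$ for all $x\in\overline{\mathcal{N}}$ and $r\ge 0$, then every entry $d(\cdot,\cdot)$ and every application of $\Gamma$ in the recursion may be replaced by the smaller $d_H(q(\cdot),q(\cdot))$ and $\Gamma_H(q(\cdot),\cdot)$, giving $N_\Gamma(w)\ge N_{\Gamma_H}(u)$ for a word $u\in F(H)$ whose product in $H$ is $\hat q(w)$; combined with the Ding--Gao fact that the canonical multiplication homomorphism $(F(H),\delta_{\Gamma_H})\to H$ is $1$-Lipschitz (which is precisely why $\Gamma_H$ is defined the way it is), this yields $N_\Gamma(w)\ge d_H(1_H,\hat q(w))$, with modulus the identity. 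Thus the whole theorem reduces to: there is a regular scale $\Gamma$ on $\overline{\mathcal{N}}$ such that for every Polish group $H$ there is a $1$-Lipschitz continuous surjection $q_H:\mathcal{N}\to H$ (for $d$ and some left-invariant $d_H\le 1$) with $\Gamma(x,r)\ge\Gamma_H(q_H(x),r)$ for all $x,r$.

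The last step is the main obstacle, and it is exactly where regularity is indispensable: one cannot take $\Gamma$ to be a supremum of the $\Gamma_H$'s, because although all are bounded by $1$, their moduli of continuity at $r=0$ are not countably cofinal, so such a supremum would fail axiom (iv). The device is that a regular scale is determined by its restriction to the countable set of finitely-supported points: from $\Gamma(x,r)\ge\Gamma(\pi_n(x),r)$ one gets that $n\mapsto\Gamma(\pi_n(x),r)$ is nondecreasing, so if $\Gamma$ is built lower semicontinuous in its first argument then $\Gamma(x,r)=\sup_n\Gamma(\pi_n(x),r)$ on all of $\mathcal{N}$; hence it suffices to prescribe $\Gamma$ on the countable set $\mathcal{N}_\omega$ (and its inverses) and extend by this supremum, and on a countable set one can diagonalize. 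Concretely I would enumerate finite pieces of ``scale data'', assign different prescribed moduli to different cylinders of $\mathcal{N}$ (arranging that the moduli available in a cylinder grow as one moves out along a fixed coordinate), and then, given $H$, build $q_H$ by a fusion / nested-intervals construction that (i) hits a countable dense subset of $H$, so $\hat q$ is onto after completion; (ii) stays $1$-Lipschitz by refining targets to shrinking $d_H$-balls as coordinates of $x$ are revealed; and (iii) routes each of the countably many group elements it uses into a cylinder whose prescribed modulus already dominates the corresponding $\Gamma_H(h,\cdot)$, which is possible since at a single cylinder one may put a modulus as bad as one likes, subject only to its tending to $0$. One then checks that the resulting $\Gamma$ is a genuine scale: axioms (i)--(iii) are built in, and axiom (iv) holds because for fixed $x$ the value $\Gamma(x,r)$ is governed by finitely many coordinates, hence by a single prescribed modulus, which tends to $0$; $\Gamma$ is regular by construction; and the passage to the Polish group $\overline{F}_\Gamma(\mathcal{N})$ together with surjectivity onto $H$ rests again on regularity, via the density of words over $\mathcal{N}_\omega$ in $\overline{F}_\Gamma(\mathcal{N})$.
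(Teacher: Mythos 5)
This statement is not proved in the paper at all: it is imported verbatim from \cite{Ding} (Example 4.9 there), so the only thing to compare your attempt against is the construction that the citation stands for. Your first two paragraphs do correctly identify the skeleton of that construction: fix $d_H\le 1$, build a $1$-Lipschitz surjection $q_H:\mathcal{N}\to H$, and invoke the Ding--Gao extension lemma (Theorem \ref{main lemma}), whose hypotheses (2) and (3) are exactly your ``comparison principle'' and whose conclusion $d_H(\Phi(w),1_H)\le N_\Gamma(w)$ gives the uniform continuity you want. So the reduction to ``one scale $\Gamma$ dominating $\Gamma_H(q_H(\cdot),\cdot)$ for every Polish $H$'' is right, and it is indeed where the whole difficulty sits.

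The problem is that this is precisely the point at which your argument stops being a proof. First, a concrete error: you assert that it is enough for $q_H$ to ``hit a countable dense subset of $H$, so $\hat q$ is onto after completion.'' A continuous homomorphism of Polish groups with dense image need not be surjective (the irrational winding $\mathbb{Z}\to\mathbb{T}$ already shows this), so density buys you nothing; surjectivity has to come from $q_H$ itself being onto $H$, which forces you to control $\Gamma_H(q_H(x),\cdot)$ at \emph{every} branch $x\in\mathcal{N}$, not just at the countably many elements your fusion explicitly ``routes.'' For an infinite branch, $q_H(x)$ is a limit point determined by all of $x$, and $\Gamma_H(h,\cdot)$ is not continuous in $h$, so dominating it from finite initial-segment data is exactly the delicate part. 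Second, the claim that a regular, suitably semicontinuous scale is recovered from its values on $\mathcal{N}_\omega$ via $\Gamma(x,r)=\sup_n\Gamma(\pi_n(x),r)$ is in direct tension with your own verification of axiom (iv): if the prescribed moduli genuinely degrade as one moves out along a branch (which they must, since no countable family of moduli dominates all $\Gamma_H(h,\cdot)$ pointwise), then ``$\Gamma(x,r)$ is governed by finitely many coordinates'' is false as stated, and $\lim_{r\to 0}\Gamma(x,r)=0$ needs an actual argument (it can be arranged, e.g.\ with step-function moduli whose countable suprema along a branch still vanish at $0$, but that is the construction, not a checkable afterthought). As it stands, the entire content of the theorem --- the explicit scale and the simultaneous verification of regularity, axiom (iv), the $1$-Lipschitz surjections, and the domination $\Gamma(x,r)\ge\Gamma_H(q_H(x),r)$ on all of $\mathcal{N}$ --- is deferred to ``enumerate, diagonalize, and check,'' which is where \cite{Ding} does all of its work.
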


    The next theorem in \cite{DingGao} tells us how to get a continuous homomorphism from $\overline{F}_\Gamma(X)$ to another topological group.

\begin{theorem}\label{main lemma}\!\!\mbox{{\rm (Ding-Gao, \cite[Lemma 3.7]{DingGao_1})}}
    Let $G$ be a topological group and $d_G$ a compatible left-invariant metric on $G$. Let $\Gamma$ be a scale on $\overline{X}$. Let $\varphi:X\rightarrow G$ be a function. Suppose that for any $x,y\in \overline{X}$ and $r\in\mathbb{R}_+$:
\begin{enumerate}
    \item[(1)] $\varphi(e)=1_G$; $\varphi(x^{-1})=\varphi(x)^{-1}$;
 \item[ (2)] $d_G(\varphi(x),\varphi(y)) \le d(x,y)$; and
 \item[ (3)] $\Gamma_G(\varphi(x),r) \le\Gamma(x,r)$.
\end{enumerate}
 Then $\varphi$ can be uniquely extended to a continuous group homomorphism $\Phi:F(X)\rightarrow G$ such that for any $w \in F(X)$
 $$d_G(\Phi(w),1_G)\le N_\Gamma(w)$$
 \end{theorem}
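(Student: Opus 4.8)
The plan is to first build $\Phi$ algebraically using that $F(X)$ is the free group on $X$, and then to establish the norm estimate $d_G(\Phi(w),1_G)\le N_\Gamma(w)$ by induction; continuity and uniqueness will then follow at once from that estimate and from freeness.

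By the universal property of the free group there is a unique group homomorphism $\Phi:F(X)\to G$ with $\Phi\upharpoonright X=\varphi$, and hypothesis (1) forces $\Phi$ to agree with $\varphi$ on all of $\overline X$, since $\Phi(e)=1_G$ automatically and $\Phi(x^{-1})=\Phi(x)^{-1}=\varphi(x)^{-1}=\varphi(x^{-1})$. Extending the notation by $\Phi(y_0\cdots y_k):=\varphi(y_0)\cdots\varphi(y_k)$ for an arbitrary word $y_0\cdots y_k\in W(X)$, hypothesis (1) also guarantees that $\Phi(w^{*})=\Phi\big((w^{*})'\big)$ for every $w^{*}\in W(X)$, because reducing a word alters neither the value of this product nor the underlying element of $F(X)$. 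Uniqueness of $\Phi$ is the standard fact that a homomorphism out of $F(X)$ is determined by its values on the generators.

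The heart of the argument is the claim that
$$d_G\big(\Phi(w),\,1_G\big)\le N^\theta_\Gamma(w)$$
for every $w\in W(X)$ of length $l+1$ and every match $\theta$ on $\{0,\ldots,l\}$, which I would prove by induction on $l$ along the recursion defining $N^\theta_\Gamma$. The base case $l=0$ is precisely hypothesis (2) applied to the pair $x,e$. If $\theta(0)=k<l$, write $w=w_1^\smallfrown w_2$; then $\Phi(w)=\Phi(w_1)\Phi(w_2)$, and left-invariance of $d_G$ with the triangle inequality gives $d_G(\Phi(w),1_G)\le d_G(\Phi(w_1),1_G)+d_G(\Phi(w_2),1_G)$, which the inductive hypothesis for $\theta_1,\theta_2$ bounds by $N^\theta_\Gamma(w)$. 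The substantial case is $\theta(0)=l$, where $w=x^{-1}w_1y$ and $\Phi(w)=a^{-1}hb$ with $a=\varphi(x)$, $b=\varphi(y)$, $h=\Phi(w_1)$; writing $r=N^{\theta_1}_\Gamma(w_1)$, we have $d_G(1_G,h)\le r$ by induction. Here I would use the rearrangement $a^{-1}hb=(a^{-1}ha)(a^{-1}b)$, so that left-invariance and the triangle inequality yield
$$d_G(a^{-1}hb,1_G)\le d_G(1_G,a^{-1}ha)+d_G(a,b).$$
Hypothesis (2) bounds $d_G(a,b)$ by $d(x,y)$, and since $d_G(1_G,h)\le r$ the definition of the scale $\Gamma_G$ bounds $d_G(1_G,a^{-1}ha)$ by $\Gamma_G(\varphi(x),r)$, which is at most $\Gamma(x,r)$ by hypothesis (3). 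Adding these gives $d_G(\Phi(w),1_G)\le d(x,y)+\Gamma(x,r)\le N^\theta_\Gamma(w)$, completing the induction.

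Granting the claim, for any $w\in F(X)$ I would take the infimum over all words $w^{*}$ with $(w^{*})'=w$ and all matches $\theta$, using $\Phi(w^{*})=\Phi(w)$, to get $d_G(\Phi(w),1_G)\le N_\Gamma(w)$; then replacing $w$ by $u^{-1}v$ and applying left-invariance once more yields $d_G(\Phi(u),\Phi(v))=d_G(1_G,\Phi(u^{-1}v))\le N_\Gamma(u^{-1}v)=\delta_\Gamma(u,v)$, so $\Phi$ is $1$-Lipschitz from $(F(X),\delta_\Gamma)$ to $(G,d_G)$ and in particular continuous. I expect the one real obstacle to be the conjugation case $\theta(0)=l$: the point is to choose an algebraic rearrangement of $a^{-1}hb$ exposing a genuine conjugate of $h$, so that the conjugation estimate built into $\Gamma_G$ can be invoked — the symmetric choice $a^{-1}hb=(a^{-1}b)(b^{-1}hb)$ works equally well and is what accounts for the $\max$ in the definition of $N^\theta_\Gamma$ — and one should keep in mind that $N^\theta_\Gamma$ and the inductive hypothesis are being used for matches on arbitrary intervals $\{m,\ldots,n\}$ rather than just initial segments, which is harmless since only the order type matters.
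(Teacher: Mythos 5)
Your proposal is correct and follows essentially the same route as the original Ding--Gao argument that this paper cites without proof: induction on the length of the word along the recursive definition of $N^{\theta}_{\Gamma}$, with the splitting case handled by left-invariance plus the triangle inequality and the case $\theta(0)=l$ handled by the factorization $a^{-1}hb=(a^{-1}ha)(a^{-1}b)$ so that hypotheses (2) and (3) bound the two pieces by $d(x,y)$ and $\Gamma(x,r)$ respectively. The concluding steps (passing to the infimum over representing words and matches, deducing $1$-Lipschitz continuity from left-invariance, and uniqueness from freeness) are all as in the standard proof.
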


 \section{Proof of the Main Theorem}

 By \cite{Ding}, we can fix a regular scale $\Gamma$ on $\overline{\mathcal{N}}$ such that $\overline{F}_\Gamma(\mathcal{N})$ is a surjectively universal Polish group, and $\Gamma(x,\cdot)$ is continuous for every $x\in\overline{\mathcal{N}}$. Recall $d_\Gamma=\delta_\Gamma+\delta^{-1}_\Gamma$ on ${F}_\Gamma(\mathcal{N})$, by abusing the notation we also denote the extension of it on $\overline{F}_\Gamma(\mathcal{N})$ by $d_\Gamma$.
\begin{lemma}
Every surjectively universal Polish group induces a complete orbit equivalence relation.
\end{lemma}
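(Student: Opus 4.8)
The plan is to show that if $G$ is surjectively universal, then its maximal orbit equivalence relation $E_G$ is complete by exhibiting, for an arbitrary orbit equivalence relation $E^Y_H$ (with $H$ a Polish group acting Borel on a Polish space $Y$), a Borel reduction of $E^Y_H$ into $E_G$. Since $G$ is surjectively universal, there is a continuous surjective homomorphism $q\colon G\twoheadrightarrow H$. The key idea is that a continuous surjective homomorphism lets us pull back an $H$-action to a $G$-action with exactly the same orbit structure: define $g\cdot y = q(g)\cdot y$ for $g\in G$, $y\in Y$. This is a Borel (indeed continuous) $G$-action on $Y$, and for $y_1,y_2\in Y$ we have $y_1\, E^Y_H\, y_2$ iff $\exists h\in H\ h\cdot y_1 = y_2$ iff (by surjectivity of $q$) $\exists g\in G\ q(g)\cdot y_1 = y_2$ iff $y_1\,(E^Y_G)\,y_2$, where $E^Y_G$ is the pulled-back $G$-action. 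Thus $E^Y_H = E^Y_G$ as equivalence relations on $Y$, so $E^Y_H \le_B E^Y_G$ via the identity, and $E^Y_G \le_B E_G$ because $E_G$ is a maximal $G$-action (Becker--Kechris). Composing, $E^Y_H \le_B E_G$. Since $E^Y_H$ was an arbitrary orbit equivalence relation, $E_G$ is complete.

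The steps, in order, are: (1) invoke surjective universality to fix a continuous surjective homomorphism $q\colon G \to H$ for the given $H$; (2) define the pulled-back action $g\cdot y := q(g)\cdot y$ and check it is a genuine Borel action of $G$ on $Y$ (associativity and the identity law follow from $q$ being a homomorphism with $q(1_G)=1_H$; Borelness follows since $q$ is continuous and the original action is Borel); (3) verify that the pulled-back action has the same orbits as the $H$-action, using surjectivity of $q$ in the nontrivial direction; (4) apply the Becker--Kechris theorem cited in the introduction to reduce this particular $G$-action to the maximal $G$-action $E_G$; (5) conclude completeness by noting $H$ and its action were arbitrary.

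The only subtlety — and it is a mild one — is making sure that ``surjectively universal'' is being used in the strong sense that every Polish group is a \emph{continuous homomorphic image} of $G$, which is exactly the definition under which Theorem~\ref{surjectively universal} is stated; given that, no analytic obstruction arises, because we never need a Borel section of $q$: the reduction witnessing $E^Y_H \le_B E^Y_G$ is literally the identity map on $Y$, and all the work is absorbed into the abstract maximality of $E_G$. I expect step (3) to be where one should be most careful to spell out both directions of the orbit equality, and step (2)'s Borelness check to be the only place requiring one to cite continuity of $q$ rather than treat it as a black box.
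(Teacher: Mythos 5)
Your proof is correct, and its central move is the same as the paper's: pull an $H$-action back along the continuous surjective homomorphism $q\colon G\to H$ via $g\cdot y=q(g)\cdot y$, and observe that surjectivity of $q$ makes the $G$-orbits coincide with the $H$-orbits. The difference is in how the pullback is deployed. The paper applies it exactly once: it fixes a complete orbit equivalence relation $E$ (whose existence is already recorded in the introduction, via a universal Polish group and the Mackey--Hjorth extension theorem), pulls back the single action inducing $E$, and concludes that $G$ itself induces $E$ --- no appeal to a maximal $G$-action is needed. You instead pull back \emph{every} orbit equivalence relation $E^Y_H$ and then invoke the Becker--Kechris maximal $G$-action to absorb all of these pulled-back $G$-actions into one. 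Your route costs you the extra citation of Becker--Kechris but buys two things: it does not presuppose that a complete orbit equivalence relation exists (it produces one), and it identifies the complete relation concretely as the maximal $G$-action rather than as some externally given $E$. Both arguments are sound; your verification of the action axioms, Borelness, and the two directions of the orbit equality in step (3) is exactly the content the paper leaves implicit.
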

\begin{proof}
Let $E$ be a complete orbit equivalence relation induced by a continuous group action of $H$ on $X$, where $H$ is a Polish group and $X$ is a Polish space. For every surjectively universal Polish group $G$, take $\pi:G\rightarrow X$ be a continuous surjective homomorphism. For $g\in G$ and $x\in X$, let $g\cdot x=\pi(g)\cdot x$, this defines a continuous group action of $G$ on $X$ that induces $E$.
\end{proof}

In the rest of this section we will show the group in Theorem \ref{surjectively universal} is not universal. It is mentioned in \cite{Ding} that the group is abstractly isomorphic to a $\bm{\Pi}^0_3$ subgroup of $S_\infty$, but we will verify that it is continuously embedded into a totally disconnected topological group.

\begin{lemma}\label{3.2}
For every $n\in\mathbb{N}$, $F(\mathcal{N}_n)$ is a discrete closed subgroup of $\overline{F}_\Gamma(\mathcal{N})$.
\end{lemma}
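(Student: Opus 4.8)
The plan is to produce a single positive constant $\epsilon_n$ with $d_\Gamma(e,w)\ge\epsilon_n$ for every $w\in F(\mathcal{N}_n)\setminus\{e\}$, and to derive both assertions from this. Indeed, such a bound says that $e$ is isolated in the subgroup $F(\mathcal{N}_n)$; since left translations are homeomorphisms of $\overline{F}_\Gamma(\mathcal{N})$, translating this neighbourhood around shows $F(\mathcal{N}_n)$ is discrete in the subspace topology. And a discrete subgroup of a Hausdorff topological group is automatically closed, so discreteness of $F(\mathcal{N}_n)$ inside the Polish group $\overline{F}_\Gamma(\mathcal{N})$ already yields closedness. Thus everything reduces to the uniform lower bound.

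To get that bound I would first peel off the scale. Since $d_\Gamma\ge\delta_\Gamma$ and $\delta_\Gamma(e,w)=N_\Gamma(w)$, it suffices to bound $N_\Gamma(w)$ from below. Because every scale satisfies $\Gamma(x,r)\ge r$ and $\Gamma(x,\cdot)$ is monotone, a routine induction on length through clauses (0)--(2) of the definition of $N^\theta_\Gamma$ gives $N^\theta_\Gamma(w^*)\ge N^\theta_{\Gamma_0}(w^*)$ for every word $w^*$ and every match $\theta$, where $\Gamma_0$ is the trivial scale; taking infima over all $w^*$ with $(w^*)'=w$ yields $N_\Gamma(w)\ge N_{\Gamma_0}(w)=\delta_{\Gamma_0}(w,e)$. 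Now Theorem~\ref{calculate} rewrites the right-hand side as $\min\{\rho(w,w^\theta):\theta\text{ a match on }\{0,\dots,|w|-1\}\}$, so it is enough to bound $\rho(w,w^\theta)$ from below, uniformly, with only matches of the reduced word $w$ itself involved.

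The heart of the argument is then elementary. Write the reduced word as $w=x_0\cdots x_k$ with each $x_i\in\overline{\mathcal{N}_n}\setminus\{e\}$, so that $\rho(w,w^\theta)=\sum_i d(x_i,x^\theta_i)$. First, every nonzero summand is at least $\epsilon_n:=2^{-n}$: by definition $x^\theta_i$ is $e$, or $x_i$ (a zero summand), or $x^{-1}_{\theta(i)}$; the extension of $d$ to $\overline{\mathcal{N}}$ gives $d(x_i,e)=1$, while $d(x_i,x^{-1}_{\theta(i)})$ is either $1$ (when one of $x_i,x^{-1}_{\theta(i)}$ lies in $\mathcal{N}_n$ and the other in $\mathcal{N}_n^{-1}$) or else the $\mathcal{N}$-distance between two points of $\mathcal{N}_n$, which, being distinct exactly when the summand is nonzero, must differ at some coordinate below $n$ and hence are $\ge 2^{-(n-1)}$ apart. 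Second, if \emph{every} summand vanishes, then $\theta$ has no fixed point (otherwise some summand equals $d(x_i,e)=1$) and $x_{\theta(i)}=x_i^{-1}$ for all $i$; but a fixed-point-free non-crossing match of $\{0,\dots,k\}$ always contains an adjacent pair $\{i,i+1\}$ (if $\theta(0)\ne 1$ the interval strictly between $0$ and $\theta(0)$ is nonempty, matched within itself by non-crossingness, and one induces on its size), and then $x_{i+1}=x_i^{-1}$ contradicts the irreducibility of $w$. Hence $\rho(w,w^\theta)\ge\epsilon_n$ for every match $\theta$, whence $d_\Gamma(e,w)\ge\epsilon_n$, as needed.

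The only genuinely delicate point is the infimum over \emph{all} words $w^*$ with $(w^*)'=w$ hidden in the definition of $N_\Gamma$; this is exactly what the reduction to the Graev metric in the second step removes, since for the trivial scale Theorem~\ref{calculate} shows only matches of $w$ itself need be examined. Everything else — the behaviour of $d$ on $\overline{\mathcal{N}_n}$, the combinatorics of non-crossing fixed-point-free matches, and the general facts about discrete subgroups — is routine, and regularity of $\Gamma$ is not used here, only the defining properties of a scale.
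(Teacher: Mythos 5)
Your proof is correct and follows essentially the same route as the paper's: reduce to the trivial scale (the Graev metric), invoke Theorem~\ref{calculate} to express the distance as a minimum of $\rho(w,w^\theta)$ over matches, and use the uniform $2^{-n}$-separation of $\mathcal{N}_n$. The only differences are cosmetic: you prove the comparison $N_\Gamma\ge N_{\Gamma_0}$ and the lower bound $\rho(w,w^\theta)\ge 2^{-n}$ in full detail where the paper cites \cite[Lemma 3.6 (i)]{DingGao} and asserts the bound, and you deduce closedness from the general fact about discrete subgroups of Hausdorff groups rather than from the uniform separation of all pairs.
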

\begin{proof}
Let $\Gamma_0$ be the trivial scale on $\overline{\mathcal{N}_n}$.  Note that for every $x\ne y\in\mathcal{N}_n$, we have $d(x,y)\ge2^{-n}$. Thus, for $u\ne v\in F(\mathcal{N}_n)$, we have $u^{-1}v\ne e$ and consequently $\rho(u^{-1}v,(u^{-1}v)^\theta)\ge2^{-n}$ for every match $\theta$. Then $\delta_{\Gamma_0}(u,v)=\delta_{\Gamma_0}(u^{-1}v,e)\ge2^{-n}$ by Theorem \ref{calculate}. By \cite[Lemma 3.6 (i)]{DingGao}, $\delta_\Gamma(u,v)\ge\delta_{\Gamma_0}(u,v)$. So $d_\Gamma(u,v)\ge 2^{-n}$ for every $u\ne v\in F(\mathcal{N}_n)$, $F(\mathcal{N}_n)$ is a discrete closed subgroup of $\overline{F}_\Gamma(\mathcal{N})$.

\end{proof}

We denote the topological group $F(\mathcal{N}_n)$ with the metric $\delta_\Gamma$ by $F_\Gamma(\mathcal{N}_n)$. Then $F_\Gamma(\mathcal{N}_n)$ is a discrete topological space by Lemma \ref{3.2}.
 
\begin{lemma}\label{gamma}
For every $w\in\overline{\mathcal{N}}$ and every $r\in\mathbb{R}_+$, we have $\Gamma_G(w,r)\le\Gamma(w,r)$ where $G$ is the group ${F}_\Gamma(\mathcal{N})$ together the metric $\delta_\Gamma$.
\end{lemma}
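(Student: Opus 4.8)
The plan is to unwind the definition of $\Gamma_G$ and then bound the relevant conjugates directly, from the definition of $N_\Gamma$, by inserting a single extra matched pair. Recall $d_G=\delta_\Gamma$ is left-invariant with $d_G(1_G,g)=\delta_\Gamma(1_G,g)=N_\Gamma(g)$, and that each element of $\overline{\mathcal{N}}$ is represented by a one-letter word, hence lies in $F(\mathcal{N})$. Thus for $w\in\overline{\mathcal{N}}$ we have
$$\Gamma_G(w,r)=\max\Big\{r,\ \sup\{N_\Gamma(w^{-1}hw):h\in F(\mathcal{N}),\ N_\Gamma(h)\le r\}\Big\},$$
and since $r\le\Gamma(w,r)$ by axiom (i) in the definition of a scale, it is enough to show $N_\Gamma(w^{-1}hw)\le\Gamma(w,r)$ whenever $N_\Gamma(h)\le r$. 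The cases $w=e$ (where $w^{-1}hw=h$ and $\Gamma(e,r)=r$) and $h=e$ are immediate, so assume neither holds.

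The key step is this. Fix $h$ with $N_\Gamma(h)\le r$, a word $h^*\in W(\mathcal{N})$ with $(h^*)'=h$ and $|h^*|=l+1$, and a match $\theta$ on $\{0,\ldots,l\}$. Form the word $W=w^{-1}{}^\smallfrown h^*{}^\smallfrown w$, of length $l+3$, together with the bijection $\theta'$ on $\{0,\ldots,l+2\}$ given by $\theta'(0)=l+2$, $\theta'(l+2)=0$, and $\theta'(i)=\theta(i-1)+1$ for $1\le i\le l+1$. A short check shows $\theta'$ is a match: it is an involution, and a crossing $i<j<\theta'(i)<\theta'(j)$ can involve neither the outer pair $\{0,l+2\}$ (nested around all other indices) nor a pair contained in $\{1,\ldots,l+1\}$ (which would be a crossing in the shifted copy of $\theta$). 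Moreover $W'=(w^{-1})\cdot h\cdot w=w^{-1}hw$, since reducing a concatenation is the same as multiplying the reduced factors in $F(\mathcal{N})$.

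Now I would apply clause (2) of the definition of $N^{\theta'}_\Gamma$, which applies because $\theta'(0)$ is the last index of $W$: writing $W=x^{-1}{}^\smallfrown w_1{}^\smallfrown y$, one reads off $x=w$, $y=w$, $w_1=h^*$, while the restriction of $\theta'$ to the middle block $\{1,\ldots,l+1\}$, re-indexed, is exactly $\theta$; hence the inner term is $N^{\theta}_\Gamma(h^*)$. Since $d(w,w)=0$, this gives
$$N^{\theta'}_\Gamma(W)=d(w,w)+\max\{\Gamma(w,N^{\theta}_\Gamma(h^*)),\ \Gamma(w,N^{\theta}_\Gamma(h^*))\}=\Gamma\big(w,N^{\theta}_\Gamma(h^*)\big).$$
Taking the infimum over all admissible pairs $(h^*,\theta)$, and using that $\Gamma(w,\cdot)$ is monotone increasing and continuous — so it commutes with the infimum of a subset of $\mathbb{R}_+$ — we obtain
$$N_\Gamma(w^{-1}hw)\le\inf_{(h^*,\theta)}N^{\theta'}_\Gamma(W)=\inf_{(h^*,\theta)}\Gamma\big(w,N^{\theta}_\Gamma(h^*)\big)=\Gamma\big(w,N_\Gamma(h)\big)\le\Gamma(w,r),$$
the last step again by monotonicity of $\Gamma(w,\cdot)$. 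Hence $\Gamma_G(w,r)\le\Gamma(w,r)$.

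The only points needing care are pure bookkeeping: confirming $\theta'$ is a match, checking that clause (2) collapses the recursion for $W$ to $N^{\theta}_\Gamma(h^*)$ with coefficient $\Gamma(w,\cdot)$ and additive term $d(w,w)=0$, and using continuity of $\Gamma(w,\cdot)$ to pull the infimum through. I do not expect any genuine obstacle beyond getting these index manipulations exactly right.
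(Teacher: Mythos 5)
Your proposal is correct and follows essentially the same route as the paper: both insert the conjugating letters as an outer matched pair around a near-optimal representative of $h$, apply clause (2) of the definition of $N^\theta_\Gamma$ to get $\Gamma(w,N^\theta_\Gamma(h^*))$, and then use monotonicity together with continuity of $\Gamma(w,\cdot)$ (your infimum-passing step is just the paper's $\epsilon\to 0$ argument in different clothing).
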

\begin{proof}
By definition, $$\Gamma_G(w,r)={\rm max}\{r,{\rm sup}\{N_\Gamma(w^{-1}uw):N_\Gamma(u)\le r\}\}.$$ We need to show that for every $u\in {F}_\Gamma(\mathcal{N})$ with $N_\Gamma(u)\le r$, we have $N_\Gamma(w^{-1}uw)\le \Gamma(w,r)$. For every $\epsilon> 0$, by the definition of $N_\Gamma (u)$, there is $v\in W(\mathcal{N})$ and a match $\theta$ on $\{0,\cdots,|v|-1 \}$ such that $v'=u$ and $N^{\theta}_\Gamma(v)\le r+\epsilon$. Let $\eta$ be a match on $\{0,\cdots,|v|+1 \}$ such that $\eta(0)=|v|+1$ and $\eta(i)=\theta(i-1)+1$ for $1\le i\le|v|$. Then by definition of $N^\eta_\Gamma(w^{-1}vw)$ we have $$N_\Gamma(w^{-1}vw)\le N^\eta_\Gamma(w^{-1}vw)=\Gamma(w,N^{\theta}_\Gamma(v))\le \Gamma(w,r+\epsilon).$$When $\epsilon$ goes to $0$, by continuity of $\Gamma(w,\cdot)$, we have $N_\Gamma(w^{-1}vw)\le\Gamma(w,r)$. Since we also have $r\le\Gamma(w,r)$, we get $\Gamma_G(w,r)\le\Gamma(w,r)$.
\end{proof}
\begin{lemma}\label{extension}
For every $n\in\mathbb{N}$, the map $\pi_n$ can be uniquely extended to a continuous group homomorphism $f_n$ from $\overline{F}_\Gamma(\mathcal{N})$ to ${F}_\Gamma(\mathcal{N}_n)$ such that $d(g,h)\ge d(f_n(g),f_n(h)) $ for every $g,h\in \overline{F}_\Gamma(\mathcal{N})$.
\end{lemma}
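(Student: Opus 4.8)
The plan is to apply Theorem \ref{main lemma} to the function $\varphi = \pi_n \colon \mathcal{N} \to F_\Gamma(\mathcal{N}_n)$, where we regard $F_\Gamma(\mathcal{N}_n)$ as a topological group equipped with the compatible left-invariant metric $\delta_\Gamma$ (restricted to $F(\mathcal{N}_n)$). First I would extend $\pi_n$ to $\overline{\mathcal{N}}$ in the natural way, setting $\pi_n(e) = e$ and $\pi_n(x^{-1}) = \pi_n(x)^{-1}$, so that condition (1) of Theorem \ref{main lemma} holds automatically. For condition (2), I need $\delta_\Gamma(\pi_n(x), \pi_n(y)) \le d(x,y)$ for all $x, y \in \overline{\mathcal{N}}$; since $\delta_\Gamma$ extends $d$ on the generators (this is the content of the Ding–Gao metric theorem quoted before Theorem \ref{calculate}), this reduces to checking $d(\pi_n(x), \pi_n(y)) \le d(x,y)$, which is immediate from the definition of the metric on $\mathcal{N}$: truncating at coordinate $n$ only removes possible disagreements, so the first point of difference can only move later. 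The cases involving $e$ and mixed $x, y^{-1}$ are handled using the convention $d(\cdot, e) = d(x^{-1}, y) = 1$ and the fact that $\pi_n$ commutes with the inverse operation.

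The main point is condition (3): I must verify $\Gamma_{F_\Gamma(\mathcal{N}_n)}(\pi_n(x), r) \le \Gamma(x, r)$ for every $x \in \overline{\mathcal{N}}$ and $r \in \mathbb{R}_+$. By Lemma \ref{gamma} applied with the group $F_\Gamma(\mathcal{N}_n)$ in place of $F_\Gamma(\mathcal{N})$ — the argument there is insensitive to which countable metric space we build the free group over — we get $\Gamma_{F_\Gamma(\mathcal{N}_n)}(\pi_n(x), r) \le \Gamma(\pi_n(x), r)$. Now I invoke the hypothesis that $\Gamma$ is \emph{regular}: by definition this gives $\Gamma(x, r) \ge \Gamma(\pi_n(x), r)$. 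Chaining these two inequalities yields condition (3). Theorem \ref{main lemma} then produces a unique continuous homomorphism $f_n \colon F(\mathcal{N}) \to F_\Gamma(\mathcal{N}_n)$ extending $\pi_n$, with $\delta_\Gamma(f_n(w), e) \le N_\Gamma(w)$ for all $w$; since $F_\Gamma(\mathcal{N}_n)$ is discrete and complete by Lemma \ref{3.2}, this homomorphism extends (uniquely, by uniform continuity) to the completion $\overline{F}_\Gamma(\mathcal{N})$.

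It remains to extract the metric estimate $d(g,h) \ge d(f_n(g), f_n(h))$ on $\overline{F}_\Gamma(\mathcal{N})$. Recall $d_\Gamma = \delta_\Gamma + \delta_\Gamma^{-1}$, so it suffices to control $\delta_\Gamma(f_n(g), f_n(h))$ by $\delta_\Gamma(g,h)$ and symmetrically for $\delta_\Gamma^{-1}$. Using left-invariance this reduces to $\delta_\Gamma(f_n(g^{-1}h), e) \le \delta_\Gamma(g^{-1}h, e)$, i.e.\ $N_\Gamma(f_n(w)) \le N_\Gamma(w)$ for $w \in F(\mathcal{N})$. This follows by a direct computation on reduced-word representatives and matches: for any $v \in W(\mathcal{N})$ with $v' = w$ and any match $\theta$, applying $\pi_n$ coordinatewise to $v$ and using conditions (2) and (3) above together with the inductive definition of $N^\theta_\Gamma$ gives $N^\theta_\Gamma(\pi_n \text{ applied to } v) \le N^\theta_\Gamma(v)$; taking the infimum over $v$ and $\theta$ yields the claim, and one checks $f_n(w) = (\pi_n(v))'$ so the infimum defining $N_\Gamma(f_n(w))$ is over a superset of these quantities. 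The main obstacle I anticipate is purely bookkeeping: making sure that the regularity inequality interfaces correctly with the conjugation scale $\Gamma_G$ computed \emph{inside} the smaller group $F_\Gamma(\mathcal{N}_n)$ rather than inside $F_\Gamma(\mathcal{N})$, and confirming that Lemma \ref{gamma}'s proof (which only uses the inductive clause for $N^\theta_\Gamma$ and continuity of $\Gamma(w,\cdot)$) transfers verbatim; once that is settled, everything else is a routine application of Theorem \ref{main lemma}.
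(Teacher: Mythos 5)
Your proposal is correct and follows essentially the same route as the paper: verify conditions (1)--(3) of Theorem \ref{main lemma} (with (3) obtained from Lemma \ref{gamma} together with regularity of $\Gamma$), then extend to the completion using that $F_\Gamma(\mathcal{N}_n)$ is a complete (discrete, closed) target, and transfer the metric inequality by density and continuity. Your explicit invocation of regularity to pass from $\Gamma(\pi_n(x),r)$ to $\Gamma(x,r)$ spells out a step the paper leaves implicit, and your direct word-and-match computation of $N_\Gamma(f_n(w))\le N_\Gamma(w)$ is a harmless (correct) re-derivation of what the conclusion of Theorem \ref{main lemma} already provides.
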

\begin{proof}
Firstly we show that $\pi_n$ can be extended to a group homeomorphism $f_n$ from ${F}_\Gamma(\mathcal{N})$ to ${F}_\Gamma(\mathcal{N}_n)$ such that $N_\Gamma(u)\ge N_\Gamma(f_n(u))$ for every $u\in {F}_\Gamma(\mathcal{N})$. Let $\pi_n(e)=e$ and $\pi_n(x^{-1})=(\pi_n(x))^{-1}$. We will use Lemma \ref{main lemma} to extend $\pi_n$ to a homomorphism. Items (1) and (2) of Lemma \ref{main lemma} are by definition, and item (3) follows from Lemma \ref{gamma}. Then we get such an extension $f_n$. 

Next we show that $f_n$ can be extended to $\overline{F}_\Gamma(\mathcal{N})$. The group $\overline{F}_\Gamma(\mathcal{N})$ is the completion of ${F}_\Gamma(\mathcal{N})$ with the metric $d_\Gamma$ such that for every $u,v\in {F}_\Gamma(\mathcal{N})$, $$d_\Gamma (u,v)=\delta_\Gamma(u,v)+\delta_\Gamma(u^{-1},v^{-1}),$$ and ${F}_\Gamma(\mathcal{N}_n)$ is a closed subgroup of $\overline{F}_\Gamma(\mathcal{N})$. We have that 

\begin{equation}\label{1}
    d_\Gamma(u,v)=N_\Gamma(u^{-1}v)+N_{\Gamma}(uv^{-1})\ge N_\Gamma(f_n(u^{-1}v))+N_{\Gamma}(f_n(uv^{-1}))\ge d_\Gamma(f_n(u),f_n(v))
\end{equation}  for every $u,v\in {F}_\Gamma(\mathcal{N})$. If $(x_m)_{m\in\mathbb{N}}$ is a Cauchy sequence in ${F}_\Gamma(\mathcal{N})$ under the metric $d_\Gamma$, then $(f_n(x_m))_{m\in\mathbb{N}}$ is a Cauchy sequence in ${F}_\Gamma(\mathcal{N}_n)$ under the metric $d_\Gamma$. So $f_n$ can be uniquely extended to a continuous group homomorphism from $\overline{F}_\Gamma(\mathcal{N})$ to ${F}_\Gamma(\mathcal{N}_n)$. 

Finally we argue that for this extension $f_n$, we have $d(g,h)\ge d(f_n(g),f_n(h)) $ for every $g,h\in \overline{F}_\Gamma(\mathcal{N})$. By (\ref{1}), we know that $d(g,h)\ge d(f_n(g),f_n(h)) $ for every $g,h\in {F}_\Gamma(\mathcal{N})$. Since ${F}_\Gamma(\mathcal{N})$ is dense in $\overline{F}_\Gamma(\mathcal{N})$ and $f_n$ is continuous, we get that $d(g,h)\ge d(f_n(g),f_n(h)) $ holds for every $g,h\in \overline{F}_\Gamma(\mathcal{N})$
\end{proof}

\begin{theorem}\label{3.5}
There is a continuous embedding from $\overline{F}_\Gamma(\mathcal{N})$ to $\prod_{n\in\mathbb{N}}F(\mathcal{N}_n)$.
\end{theorem}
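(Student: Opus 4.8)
The plan is to take $\Phi\colon \overline{F}_\Gamma(\mathcal{N})\to\prod_{n}F(\mathcal{N}_n)$ defined by $\Phi(g)=(f_n(g))_{n}$, where $f_n$ is the homomorphism furnished by Lemma~\ref{extension}. First one checks the target is a genuine Polish group: $\mathcal{N}_n$ is countable, so $F(\mathcal{N}_n)$ is a countable group, and it is discrete as a topological group by Lemma~\ref{3.2}; hence $\prod_n F(\mathcal{N}_n)$, with the product topology, is a (totally disconnected) Polish group. Since each $f_n$ is a continuous homomorphism, $\Phi$ is a continuous homomorphism. So the whole content is that $\Phi$ is injective, i.e.\ $\bigcap_n\ker f_n=\{1\}$.

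The key technical estimate, which I would prove first, is: for every $u\in F_\Gamma(\mathcal{N})$ one has $d_\Gamma(u,f_n(u))\longrightarrow 0$ as $n\to\infty$. To see this, write $u=x_0\cdots x_l$ in reduced form, so that $d(x_i,\pi_n(x_i))\le 2^{-n}$ and $f_n(u)$ is the reduced word of $\pi_n(x_0)\cdots\pi_n(x_l)$. Consider the (unreduced) word $w=x_l^{-1}\cdots x_0^{-1}\,\pi_n(x_0)\cdots\pi_n(x_l)$, whose reduced word is $u^{-1}f_n(u)$, together with the nested match $\theta$ pairing the $i$-th letter of the $x^{-1}$-block with the mirrored letter of the $\pi_n$-block. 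Applying clause (2) of the definition of $N^\theta_\Gamma$ repeatedly, and using that $\Gamma$ is regular --- so $\Gamma(\pi_n(x_i),r)\le\Gamma(x_i,r)$ --- one gets the recursion $R_0\le 2^{-n}$ and $R_i\le 2^{-n}+\Gamma(x_i,R_{i-1})$ for the nested quantities, with $N^\theta_\Gamma(w)=R_l$. Since each $\Gamma(x_i,\cdot)$ is continuous and $\Gamma(x_i,0)=0$, an induction on $i$ gives $R_i\to 0$, hence $\delta_\Gamma(u,f_n(u))=N_\Gamma(u^{-1}f_n(u))\le N^\theta_\Gamma(w)\to 0$. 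The same computation applied to the word $x_0\cdots x_l\,\pi_n(x_l)^{-1}\cdots\pi_n(x_0)^{-1}$, which reduces to $u f_n(u)^{-1}$, gives $\delta_\Gamma^{-1}(u,f_n(u))\to 0$, and therefore $d_\Gamma(u,f_n(u))\to 0$.

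Injectivity then follows by a density argument. Suppose $g\in\overline{F}_\Gamma(\mathcal{N})$ satisfies $f_n(g)=1$ for all $n$, and let $\varepsilon>0$. Choose $u\in F_\Gamma(\mathcal{N})$ with $d_\Gamma(g,u)<\varepsilon$, then by the estimate choose $n$ with $d_\Gamma(u,f_n(u))<\varepsilon$. Since $d_\Gamma(f_n(u),f_n(g))\le d_\Gamma(u,g)<\varepsilon$ by Lemma~\ref{extension} and $f_n(g)=1$, the triangle inequality gives $d_\Gamma(g,1)<3\varepsilon$; letting $\varepsilon\to 0$ forces $g=1$. Thus $\Phi$ is an injective continuous homomorphism of $\overline{F}_\Gamma(\mathcal{N})$ into the totally disconnected Polish group $\prod_n F(\mathcal{N}_n)$, which is the asserted continuous embedding.

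The one nonroutine point is the key estimate: the delicate part is controlling the inflation of distances produced by the scale $\Gamma$ through the nested clause-(2) applications, which is exactly where regularity of $\Gamma$ (from Theorem~\ref{surjectively universal}) and the continuity of each $\Gamma(x,\cdot)$ enter; the rest is bookkeeping on the nested match together with an $\varepsilon/3$ argument. It is worth noting that, combined with Lemma~\ref{3.2}, the estimate also yields $d_\Gamma(g,1)<2^{-n}\Rightarrow f_n(g)=1$, so that the $f_n$ separate a neighbourhood basis at the identity in one direction; one should be slightly careful about whether $\Phi$ is in addition a homeomorphism onto its image, but for the use made of the theorem --- that $\overline{F}_\Gamma(\mathcal{N})$ is not universal --- an injective continuous homomorphism into a totally disconnected group already suffices, since a universal Polish group would contain a closed copy of (say) $\mathbb{R}$, which admits no nontrivial continuous homomorphism into a totally disconnected group.
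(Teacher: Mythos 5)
Your proof is correct, and the map is the same one the paper uses ($g\mapsto (f_n(g))_n$), but your route to injectivity is genuinely different. The paper's argument is soft: it invokes the density of $F(\mathcal{N}_\omega)=\bigcup_n F(\mathcal{N}_n)$ in $\overline{F}_\Gamma(\mathcal{N})$, approximates two distinct points $g\ne h$ by words $u,v$ lying in some single $F(\mathcal{N}_n)$, and uses the facts that $f_n$ fixes $F(\mathcal{N}_n)$ pointwise and is $1$-Lipschitz to conclude $d_\Gamma(f_n(g),f_n(h))\ge d_\Gamma(g,h)-4\epsilon>0$. You instead prove the quantitative statement $d_\Gamma(u,f_n(u))\to 0$ for each $u\in F_\Gamma(\mathcal{N})$ by an explicit nested-match computation on $x_l^{-1}\cdots x_0^{-1}\pi_n(x_0)\cdots\pi_n(x_l)$, which is where regularity of $\Gamma$ and property (iv) of a scale do the work, and then run an $\epsilon/3$ argument against the dense subgroup $F(\mathcal{N})$. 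Your estimate is in fact exactly what underlies the density of $F(\mathcal{N}_\omega)$ that the paper takes for granted (since $f_n(u)\in F(\mathcal{N}_\omega)$ converges to $u$), so your version is more self-contained at the cost of redoing the match bookkeeping; the paper's version is shorter because it outsources that step. Two small remarks: in the recursion you only need scale axiom (iv) (continuity of $\Gamma(x_i,\cdot)$ at $0$) together with monotonicity, not full continuity; and your closing observation that $d_\Gamma(g,1)<2^{-n}$ forces $f_n(g)=1$ follows from Lemma \ref{extension} combined with the $2^{-n}$-separation in Lemma \ref{3.2} rather than from your key estimate. Your final point is also right and matches the paper: only injectivity plus continuity is needed, since a continuous injection cannot carry the connected group $\mathbb{R}$ into a totally disconnected group.
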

\begin{proof}
    Let $f$ from $\overline{F}_\Gamma(\mathcal{N})$ to $\prod_{n\in\mathbb{N}}F(\mathcal{N}_n)$ map $g$ to $(f_n(g))_{n\in\mathbb{N}}$ for every $g\in \overline{F}_\Gamma(\mathcal{N})$. The map $f$ is a continuous homomorphism, we just need to show it is an injection.

    For $g\ne h\in \overline{F}_\Gamma(\mathcal{N})$, let $0<\epsilon< d_\Gamma(g,h)/4$. Since $F(\mathcal{N}_\omega)$ is dense in $\overline{F}_\Gamma(\mathcal{N})$, we can take $u,v\in F(\mathcal{N}_\omega)$ such that $d_\Gamma(u,g)\le\epsilon$ and $d_\Gamma(v,h)\le\epsilon$. There is $n\in\mathbb{N}$ such that $u,v\in F(\mathcal{N}_n)$, and since $f_n$ entends $\pi_n$, we have $f_n(u)=u,f_n(v)=v$. By Lemma \ref{extension}, we have $d_\Gamma(f_n(g),u)\le\epsilon$ and $d_\Gamma(f_n(h),v)\le\epsilon$, so $$d_\Gamma(f_n(g),f_n(h))\ge d_\Gamma(u,v)-2\epsilon\ge d_\Gamma(g,h)-4\epsilon>0.$$Then $f_n(g)\ne f_n(h)$, so $f(g)\ne f(h)$, and hence $f$ is an injection.
\end{proof}

\begin{theorem}
The group $\overline{F}_\Gamma(\mathcal{N})$ is not a universal Polish group.
\end{theorem}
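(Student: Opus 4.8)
The plan is to leverage Theorem \ref{3.5}, which embeds $\overline{F}_\Gamma(\mathcal{N})$ continuously into the countable product $\prod_{n\in\mathbb{N}}F(\mathcal{N}_n)$, where each $F(\mathcal{N}_n)$ is discrete by Lemma \ref{3.2}. A countable product of discrete (hence totally disconnected) groups is itself a totally disconnected Polish group, and being totally disconnected is inherited by subgroups with the subspace topology. So the continuous embedding exhibits $\overline{F}_\Gamma(\mathcal{N})$ as (topologically isomorphic to) a subgroup of a totally disconnected group, and therefore $\overline{F}_\Gamma(\mathcal{N})$ is itself totally disconnected.

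Next I would invoke the structural fact that a universal Polish group cannot be totally disconnected. The cleanest route is to recall that there exist connected Polish groups that are nontrivial --- for instance $\mathbb{R}$, or the separable Hilbert space under addition, or $\mathrm{U}(\ell^2)$. If $\overline{F}_\Gamma(\mathcal{N})$ were universal, then $\mathbb{R}$ would embed as a closed subgroup of it. But a closed subgroup of a totally disconnected group is totally disconnected, while $\mathbb{R}$ is connected and nontrivial --- a contradiction. Hence $\overline{F}_\Gamma(\mathcal{N})$ is not universal.

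The only point needing a little care is verifying that $\overline{F}_\Gamma(\mathcal{N})$ genuinely inherits total disconnectedness through the embedding $f$ of Theorem \ref{3.5}: one should note that $f$ is a continuous injective homomorphism, and although $f$ need not be a homeomorphism onto its image a priori, total disconnectedness only requires that the image, in the subspace topology inherited from $\prod_n F(\mathcal{N}_n)$, be totally disconnected, and then pulling back along the continuous bijection $f$ onto that image shows $\overline{F}_\Gamma(\mathcal{N})$ has no nontrivial connected subsets --- indeed the continuous image of a connected set is connected, so if $C\subseteq\overline{F}_\Gamma(\mathcal{N})$ were connected with more than one point, $f(C)$ would be a connected subset of $\prod_n F(\mathcal{N}_n)$ with more than one point, which is impossible. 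This last observation is really the whole content, and it is elementary.

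I expect no serious obstacle here; the hard work was already done in establishing Theorem \ref{3.5}. The proof is a short deduction: total disconnectedness passes to subgroups, $\mathbb{R}$ (or any nontrivial connected Polish group) would have to embed as a closed subgroup of a universal group, and connected meets totally disconnected only at singletons.
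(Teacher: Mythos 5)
Your proposal is correct and follows essentially the same route as the paper: the paper likewise assumes $(\mathbb{R},+)$ embeds as a closed subgroup, composes with the embedding of Theorem \ref{3.5} into the totally disconnected group $\prod_{n}F(\mathcal{N}_n)$, and derives a contradiction from connectedness of $\mathbb{R}$. Your extra remark that only continuity and injectivity of $f$ are needed (not that it be a homeomorphism onto its image) is a correct and worthwhile clarification of a point the paper leaves implicit.
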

\begin{proof}
We argue that $(\mathbb{R,+})$ is not isomorphic to a closed subgroup of $\overline{F}_\Gamma(\mathcal{N})$.  If $(\mathbb{R,+})$ is isomorphic to a closed subgroup of $\overline{F}_\Gamma(\mathcal{N})$, then by Theorem \ref{3.5} there is a continuous embedding from $(\mathbb{R,+})$ to $\prod_{n\in\mathbb{N}}F(\mathcal{N}_n)$. The topological space $\mathbb{R}$ is connected but $\prod_{n\in\mathbb{N}}F(\mathcal{N}_n)$ is totally disconnected, a contradiction.

We also have the alternative proof which does use Theorem \ref{3.5}. As mentioned in \cite{Ding}, $\overline{F}_\Gamma(\mathcal{N})$ is an abstract subgroup of $S_\infty$. The isometry group of the Urysohn space ${\rm Iso}(\mathbb{U})$ cannot be continuously embedded to $\overline{F}_\Gamma(\mathcal{N})$, because by automatic continuity \cite{Sabok,Sabokerr} (see \cite{mILIN,rose} for new proofs), ${\rm Iso}( \mathbb{U})$ cannot be embedded to $S_\infty$ as an abstract subgroup since ${\rm Iso}(\mathbb{U})$ is connected and $S_\infty$ is totally disconnected.
\end{proof}


\bibliographystyle{plain}
\bibliography{bibliography}
\end{document}